 \numberwithin{equation}{section}
 \newtheorem{thm}{Theorem}[section]
 \newtheorem{cor}[thm]{Corollary}
 \newtheorem{lem}[thm]{Lemma}
  \newtheorem{fact}[thm]{Fact}
 \newtheorem{prop}[thm]{Proposition}
 \theoremstyle{definition}
 \theoremstyle{remark}
 \newtheorem{rem}[thm]{Remark}
  \newtheorem{question}[thm]{Question}
 \newtheorem*{ex}{Example}
\begin{document}

\title{Schauder basis with finite Blaschke products}

\author{E. Fricain}
 \address{Univ. Lille, CNRS, UMR 8524 - Laboratoire Paul Painlevé, F-59000 Lille, France}
 \email{emmanuel.fricain@univ-lille.fr}

\author{J. Mashreghi}
\address{D\'epartement de math\'ematiques et de statistique,
         Universit\'e Laval,
         Qu\'ebec, QC,
         Canada G1K 7P4}
\email{javad.mashreghi@mat.ulaval.ca}

\author{M. Nasri}
\address{Department of Mathematics and Statistics,
         University of Winnipeg,
         Winnipeg, MN,
         Canada R3B 2E9}
\email{m.nasri@uwinnipeg.ca}

\author{M.Ostermann}
 \address{Univ. Lille, CNRS, UMR 8524 - Laboratoire Paul Painlevé, F-59000 Lille, France}
 \email{maeva.ostermann@univ-lille.fr}

\subjclass{Primary 30J10, 46B15; Secondary 46E22}

\keywords{Finite Blaschke products, Blaschke condition, Schauder basis, Analytic function spaces}

\begin{abstract}
We study Schauder bases for spaces of holomorphic functions in the open unit disk $\mathbb{D}$. Given a non-Blaschke sequence $(\lambda_n)_{n\geq1}$ in $\mathbb{D}$, we show that the associated sequence of finite Blaschke products $(B_n)_{n\geq1}$ forms a Schauder basis for $\mathrm{Hol}(\overline{\mathbb{D}})$ when this space is endowed with a norm inherited from a Banach space $X$ satisfying a set of natural structural assumptions. This abstract framework includes, in particular, the classical Hardy spaces $H^p$ ($1\le p\le\infty$), the weighted Bergman spaces $A_\alpha^p$ ($1\le p\le\infty$, $\alpha>-1$), and BMOA. We further prove that if the sequence $(\lambda_n)_{n\geq1}$ is contained in a compact subset of the open unit disk, then $(B_n)_{n\geq1}$ is a Schauder basis for $\mathrm{Hol}(\overline{\mathbb{D}})$ endowed with its natural topology. Moreover, in the case of Hardy spaces $H^p$ ($1<p<\infty$), we provide a complete characterization of those sequences $(\lambda_n)_{n\geq1}$ for which the corresponding sequence of finite Blaschke products forms a Schauder basis. Finally, we discuss related phenomena in the context of the disk algebra $\mathcal{A}(\mathbb D)$.
\end{abstract}

\maketitle

\section{Introduction}
Let $\mathcal{X}$ be  a complex topological linear space. We say that the sequence $(x_n)_{n \geq 0}$ in $\mathcal{X}$ is a {\em Schauder basis} for $\mathcal{X}$ if, for each $x \in \mathcal{X}$, there is a unique sequence $(a_n)_{n\geq 0}$ of complex numbers such that $x=\sum_{n=0}^\infty a_n x_n$, where the series converges in the topology of $\mathcal{X}$. For recent developments in Schauder basis, see \cites{MR4322610, MR3856231} and the references within. In this paper, we focus on the case where $\mathcal X$ is a space of analytic functions in the open unit disk $\mathbb{D}$ of the complex plane (for example, $\mathrm{Hol}(\overline{\mathbb{D}})$, the Hardy space $H^p$ or the disk algebra $\mathcal{A}(\mathbb D)$), and the sequence $(x_n)_{n\geq 0}$ is a sequence of finite Blaschke products. More precisely, given any sequence $(\lambda_n)_{n \geq 1}$ in $\mathbb{D}$, we define the finite Blaschke products $B_0=1$ and
\begin{equation}\label{D:def-Bn}
B_n(z) = \prod_{k=1}^{n} \frac{\lambda_k-z}{1-\overline{\lambda}_kz}, \qquad n \geq 1.
\end{equation}
The sequence $(\lambda_n)_{n \geq 1}$ in $\mathbb{D}$ is called a Blaschke sequence whenever
\begin{equation}\label{E:Blaschke-condition}
\sum_{n=1}^{\infty} (1-|\lambda_n|) < \infty.
\end{equation}
In this case, under suitable normalization, the modified $B_n$ converges to the infinite Blaschke product
\[
B(z) = \prod_{k=1}^{\infty} \frac{|\lambda_k|}{\lambda_k} \, \frac{\lambda_k-z}{1-\overline{\lambda}_kz}.
\]
However, if \eqref{E:Blaschke-condition} is not fulfilled, the so called non-Blaschke sequence, then
\begin{equation}\label{E:non-blaschke}
\lim_{n \to \infty} B_n(z) = 0, \qquad z \in \mathbb{D}.
\end{equation}
In fact, the convergence to zero is uniform in compact subsets of $\mathbb{D}$.

The space $\mathrm{Hol}(\overline{\mathbb{D}})$ represents the collection of functions that are analytic in a neighborhood of the closed unit disk $\overline{\mathbb{D}}$. More explicitly, $f \in  \mathrm{Hol}(\overline{\mathbb{D}})$ whenever there is an $R_0=R_0(f)>1$ such that $f$ is analytic on the disk $R_0\mathbb{D}=D(0,R_0)$ centered at the origin with radius $R_0$. It is trivial that each finite Blaschke product belongs to $ \mathrm{Hol}(\overline{\mathbb{D}})$. This space is usually endowed with its natural topology defined as the inductive limit topology of the family of Banach spaces
$\{\mathcal A(r_n\mathbb D))\}_{n\geq 1}$, where $r_n=1+\frac{1}{n}$ and where, for an open disk $D$, we write $\mathcal A(D)=Hol(D)\cap C(\overline{D})$ (equipped with the sup norm on $D$).
In concrete terms, a sequence $(f_n)_{n\geq 0}$ converges to $f$ in
$\mathrm{Hol}(\overline{\mathbb D})$ if and only if there exists a $R_0>1$ such that $f_n, f \in \mathcal A(R_0\mathbb D)$ for all sufficiently large $n$, and
$f_n \longrightarrow f$ uniformly on $R_0\mathbb D$. Note that, with this topology, $\mathrm{Hol}(\overline{\mathbb D})$ is a complete topological vector space.  However, in this note, we also consider other topologies inherited as a subspace of some Banach spaces $X$, such as $H^p$ spaces or $BMOA$. In such settings, one of our goals is to present a Schauder basis for $\mathrm{Hol}(\overline{\mathbb{D}})$ consisting of a sequence of finite Blaschke products given by \eqref{D:def-Bn}.
\medskip

The organization of the paper is as follows. In Section \ref{S:preliminaries}, we gather some facts about Hardy spaces and Toeplitz operators. A detailed description of Hardy spaces is available in \cites{Duren, MR2261424, MR2500010}, and for Toeplitz operators, see \cites{MR1634900, MR4545809}. In Section \ref{S:technical-lemma}, we prove some technical results that will be used in the proof of the main results. The most important among them are Lemma \ref{L:Estim_NormSup}, which provides a crucial estimate for the norm of elements in the range of conjugate-analytic Toeplitz operators and Proposition \ref{Prop:unicity} which gives the unicity of the decomposition $\sum a_n B_n$, when the convergence is uniform on compact subsets of the open unit disk. In Section~\ref{S:completeness}, we give a characterization of the completeness of the sequence $(B_n)_{n\geq 0}$ in $\mathcal{A}(\mathbb D)$ and $H^p$. In Section \ref{S:schauder-basis}, we prove that, given any non-Blaschke sequence $(\lambda_n)_{n\geq 1}$, the sequence of finite Blaschke products $(B_n)_{n\geq 1}$, defined as in \eqref{D:def-Bn}, is a Schauder basis for $ \mathrm{Hol}(\overline{\mathbb{D}})$, endowed with a norm inherited from a Banach space $X$ satisfying some specific properties. 
This abstract framework encompasses classical settings such as the Hardy spaces $H^p$, $1 \leq p \leq \infty$, the weighted Bergman spaces $A_\alpha^p$, $1 \leq p \leq \infty$, $\alpha > -1$, and BMOA. We also prove that if, furthermore, the non-Blaschke sequence $(\lambda_n)_{n\geq 1}$ lives in a compact subset of the open unit disk, then the sequence $(B_n)_{n\geq 1}$ forms a Schauder basis for $ \mathrm{Hol}(\overline{\mathbb{D}})$ equipped with its natural topology. In Section~\ref{sec:Hardy}, we completely characterize the sequences $(\lambda_n)_{n\geq 1}$ so that $(B_n)_{n\geq 0}$ is a Schauder basis for $H^p$, $1<p<\infty$. Finally, in Section \ref{S:Shaprness}, we discuss the case of the disk algebra $\mathcal{A}(\mathbb D)$.

\par\medskip
We are deeply grateful to the referees for their careful reading of the paper and for their suggestions which allowed us to significantly improve several of our results.

\section{Notations and some standard facts} \label{S:preliminaries}
We denote by $\mathbb{T}$ the unit circle and by $\mathrm dm$ the normalized Lebesgue measure on $\mathbb{T}$. Recall that if $1\leq  p<\infty$, then the {\em conjugate exponent} of $p$ is the number $1<q\leq \infty$ such that $1/p+1/q=1$. According to a result of F. Riesz, the Lebesgue spaces $L^p(\mathbb{T})$ and $L^q(\mathbb{T})$ are dual to each other. The duality pairing can be written as
\begin{equation}\label{E:duality-lplq}
\langle f,g \rangle = \frac{1}{2\pi}\int_{0}^{2\pi} f(e^{it}) \, \overline{g(e^{it})} \, \mathrm dt,
\end{equation}
where $f \in L^p(\mathbb{T})$ and $g \in L^q(\mathbb{T})$. It is important to note that there are other ways to define duality pairing, and each formula has its own advantages.

The Hardy space $H^p(\mathbb{T})$, $1\leq p\le\infty$, is a closed subspace of $L^p(\mathbb{T})$ consisting of elements with vanishing negatively indexed Fourier coefficients. More explicitly, each $f \in H^p(\mathbb{T})$ is an element of $L^p(\mathbb{T})$ with the Fourier series representation
\[
f(e^{i\theta}) = \sum_{n=0}^{\infty} a_ne^{in\theta}, \qquad e^{i\theta} \in \mathbb{T}.
\]
The Hardy space can be considered equally as the family of analytic functions that live on the open unit disk $\mathbb{D}$ and satisfy the growth restriction
\[
\|f\|_{p} := \sup_{0<r<1} \left( \frac{1}{2\pi} \int_{0}^{2\pi} |f(re^{it})|^p \, \mathrm dt \right)^{1/p} < \infty,~\text{for}~ 1\le p<\infty,
\]
and, for $p=\infty$,
\[\|f\|_{\infty}:=\sup_{z\in\mathbb D}|f(z)|<\infty.\]
The notation for this setting is $H^p(\mathbb{D})$. The bridge between the two concepts is made via Fatou's theorem, which ensures the existence of radial limits almost everywhere on $\mathbb{T}$ for each $f \in H^p(\mathbb{D})$, and that the resulting boundary function is in $H^p(\mathbb{T})$ whose $L^p(\mathbb T)$-norm coincides with the $H^p$-norm of $f$. Conversely, given a function $f \in H^p(\mathbb{T})$, the Poisson integral formula provides the corresponding function in $H^p(\mathbb{D})$. Hence, due to this correspondence, we simply use $H^p$ for both cases. 

The Cauchy kernels 
\begin{equation}\label{E:cauchy-kernel}
k_{\lambda}(z) := \frac{1}{1-\overline{\lambda}z}, \qquad z,\lambda \in \mathbb{D},
\end{equation}
play a special role in the theory of Hardy spaces. Each $f \in H^p$ has the representation
\[
f(\lambda) = \frac{1}{2\pi} \int_{0}^{2\pi} \frac{f(e^{it})}{1-\lambda e^{-it}} \, \mathrm dt, \qquad f \in H^p,\,\lambda\in\mathbb{D},
\]
which, considering the duality pairing \eqref{E:duality-lplq}, can be written in the more concise form
\begin{equation}\label{E:cauchy-kernel-2}
f(\lambda) = \langle f,k_\lambda\rangle, \qquad f \in H^p,\,\lambda\in\mathbb{D}.
\end{equation}
Recall also that $\|k_\lambda\|_{\infty}=\frac1{1-|\lambda|}$ and
\begin{equation}\label{E:norme-cauchy-kernel}
\frac{1}{(1-|\lambda|^2)^{1/p}}\leq \|k_\lambda\|_{q} \leq  \frac{\kappa_q}{(1-|\lambda|^2)^{1/p}},
\end{equation}
where $\kappa_q=\|P\|_{\mathcal L(L^q,H^q)}$ if $1<q<\infty$. 
See, for example, 
\cite{Hartmann1996}*{Lemma A.2.3}. 
Here, $P$ is the M. Riesz projection of $L^p(\mathbb{T})$ onto $H^p(\mathbb{T})$, defined by
\[
P\left( \sum_{n=-\infty}^{\infty} a_ne^{in\theta} \right) := \sum_{n=0}^{\infty} a_ne^{in\theta},
\]
where the sum is the Fourier series of an element of an arbitrary $f \in L^p(\mathbb{T})$. The  M. Riesz celebrated result says that $P$ is a bounded projection whenever $1<p<\infty$. It is easy to verify that
\begin{equation}\label{E:Mreisz-1}
\langle Pf,g \rangle = \langle f,Pg \rangle
\end{equation}
for all $f \in L^p(\mathbb{T})$ and all $g \in L^q(\mathbb{T})$. 
\par\medskip
Let $\varphi$ be a bounded measurable function on $\mathbb{T}$. The Toeplitz operator, with symbol $\varphi$, on the Hardy space $H^p$ is the mapping
\[
\begin{array}{cccc}
T_{\varphi}: & H^p & \longrightarrow & H^p \\
& f & \longmapsto & P(\varphi f).
\end{array}
\]
The Toeplitz operators are bounded on $H^p$, $1<p<\infty$, and fulfill the estimation
\[
\|T_{\varphi}f\|_{p} \leq \kappa_p \|\varphi\|_{L^\infty} \|f\|_{p}, \qquad f \in H^p.
\]
One of the striking properties of conjugate analytic Toeplitz operators (corresponding to a symbol $\overline{\varphi}$, where $\varphi\in H^\infty$) is the abundance of their eigenvectors, as witnessed by the identity
\begin{equation}\label{E:toeplitz-coanalytic}
T_{\overline{\varphi}} k_\lambda = \overline{\varphi(\lambda)} \, k_\lambda, \qquad \lambda \in \mathbb{D},
\end{equation}
where $k_{\lambda}$ is the Cauchy kernel \eqref{E:cauchy-kernel}. A detailed treatment of Toeplitz operators is available in \cite{MR1634900}. 

Let $\lambda \in \mathbb{D}$, and let
\begin{equation}\label{E:def-blambda}
b_{\lambda}(z) := \frac{\lambda-z}{1-\overline{\lambda}z}, \qquad z \in \mathbb{D},
\end{equation} 
It is straightforward to see that $b_{\lambda}$ and $k_{\lambda}$ are related via the linear functional equation
\begin{equation}\label{eq:sdqsdqs23}
(1-|\lambda|^2)k_{\lambda} + \overline{\lambda} b_{\lambda} = 1, \qquad \lambda \in \mathbb{D}.
\end{equation}
\smallskip
In Section \ref{sec:Hardy}, we shall study the property of $(B_n)_{n\geq 0}$ to be a Schauder basis for $H^p$ using a family $(\phi_n)_{n\geq 1}$ related to $(B_n)_{n\geq 0}$, defined by
\begin{equation}\label{eq:Takneka-basis}
\phi_n~=~\sqrt{1-|\lambda_n|^2}B_{n-1} k_{\lambda_n}, \qquad n\ge 1,
\end{equation}
where  $B_0=1$ and $B_n$ are given by \eqref{D:def-Bn} for $n\ge1$. If $(\lambda_n)_{n\geq 1}$ is a non-Blaschke sequence in $\mathbb{D}$, then it is known that $(\phi_n)_{n\geq 1}$ is a Schauder basis for $H^p$ for all $1<p<\infty$ and, more specifically,  an orthonormal basis for $H^2$, the so called  \emph{Takenaka-Malmquist} basis \cites{CoifmanPeyriere2019,QianChenTan2014}. The following straightforward computation will be used in the sequel.

\begin{lem}\label{Fact:CalculImBaseTM}
Let $N,n\ge1$. Then we have
\[
T_{\overline{B}_N}\phi_n =
\begin{cases}
\phi_n/B_N, &\text{if }~n>N,\\
0, &\text{if }~n\le N.
\end{cases}\]
\end{lem}

\begin{proof}
Suppose first that $n>N$, then \[\frac{\phi_n}{B_N}=
\begin{cases}
\sqrt{1-|\lambda_{N+1}|^2}k_{\lambda_{N+1}}, &\text{if }~n=N+1,\\
\sqrt{1-|\lambda_n|^2}k_{\lambda_n}\prod_{k=N+1}^{n-1}b_{\lambda_k}, &\text{if }~n>N+1.
\end{cases}
\]
In particular $\phi_n/B_N$ belongs to $H^\infty$ and thus 
\[T_{\overline{B}_N}\phi_n=P(\phi_n/B_N)=\phi_n/B_N, \qquad\text{for all }~n>N.\]
Suppose now that $n\le N$. By \eqref{eq:sdqsdqs23}, we have $\overline \lambda_nb_{\lambda_n}=1-(1-|\lambda_n|^2)k_{\lambda_n}$, and then
\[
\overline B_N\phi_n 
~= \frac1{\sqrt{1-|\lambda_n|^2}}\overline B_NB_{n-1}(1-\overline\lambda_nb_{\lambda_n})
~=~ \frac1{\sqrt{1-|\lambda_n|^2}}\overline{\frac{B_N}{B_n}(b_{\lambda_n}-\lambda_n)}~\text{ on}~\mathbb{T}.\]
Since $g=\frac{B_N}{B_n}(b_{\lambda_n}-\lambda_n)\in H^\infty$ and satisfies $g(0)=0$, it follows that $P\overline g=0$ and thus
\[
T_{\overline{B}_N}\phi_n
= \frac1{\sqrt{1-|\lambda_n|^2}}P\bar g=0, \qquad\text{for all }~n\le N.\qedhere\]
\end{proof}    

\section{Technical lemmas and unicity of the decomposition} \label{S:technical-lemma}
Recall the definition of the Cauchy kernel $k_{\lambda}$ in \eqref{E:cauchy-kernel} and the definition of the Blaschke factor $b_\lambda$ in \eqref{E:def-blambda}. The following lemma is a simple decomposition formula that is needed in our main result.  It corresponds to the direct sum $H^p=\mathbb C k_{\lambda}\oplus b_{\lambda}H^p$, $\lambda\in\mathbb D$.

\begin{lem} \label{L:rep-f}
Let $\lambda \in \mathbb{D}$ and let $1\leq p\leq \infty$. Then, for each $f \in H^p$,
\begin{equation}\label{E:proj-kb}
f =  (1-|\lambda|^2)f(\lambda) k_{\lambda} + b_{\lambda} T_{\overline{b}_{\lambda}}f.
\end{equation}
\end{lem}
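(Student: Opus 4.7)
The approach is to identify the second summand by first constructing a natural candidate $g \in H^p$ such that $f = (1-|\lambda|^2) f(\lambda) k_\lambda + b_\lambda g$, and then verifying that this $g$ coincides with $T_{\bar b_\lambda} f$. The key ingredient will be the eigenvector identity \eqref{E:toeplitz-coanalytic}.

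First I would set $h := f - (1-|\lambda|^2) f(\lambda) k_\lambda$ and observe, using $k_\lambda(\lambda) = 1/(1-|\lambda|^2)$, that $h(\lambda) = f(\lambda) - (1-|\lambda|^2) f(\lambda)/(1-|\lambda|^2) = 0$. Since $b_\lambda$ is an inner function with a simple zero at $\lambda$ and no other zero in $\mathbb{D}$, the quotient $g := h/b_\lambda$ again belongs to $H^p$ (this is the standard fact that division by an inner factor whose zeros are absorbed by the numerator preserves $H^p$). By construction we then have the decomposition
\begin{equation*}
f = (1-|\lambda|^2) f(\lambda) k_\lambda + b_\lambda g,
\end{equation*}
so it only remains to check that $g = T_{\bar b_\lambda} f$.

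For this, I would use that $|b_\lambda|=1$ almost everywhere on $\mathbb T$, so that multiplying the boundary values of the above decomposition by $\bar b_\lambda$ gives
\begin{equation*}
\bar b_\lambda f = (1-|\lambda|^2) f(\lambda)\, \bar b_\lambda k_\lambda + g \qquad \text{on } \mathbb{T}.
\end{equation*}
Applying the M.\ Riesz projection $P$ to both sides and using that $g \in H^p$ (so $Pg = g$) yields
\begin{equation*}
T_{\bar b_\lambda} f = P(\bar b_\lambda f) = (1-|\lambda|^2) f(\lambda)\, T_{\bar b_\lambda} k_\lambda + g.
\end{equation*}
The eigenvector identity \eqref{E:toeplitz-coanalytic} applied with $\varphi = b_\lambda$ gives $T_{\bar b_\lambda} k_\lambda = \overline{b_\lambda(\lambda)}\, k_\lambda = 0$, since $b_\lambda(\lambda) = 0$. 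Therefore $T_{\bar b_\lambda} f = g$, which concludes the proof.

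The whole argument is short and essentially algebraic; there is no serious obstacle. The only points that need a word of justification are the divisibility step $h/b_\lambda \in H^p$ and the legitimacy of evaluating the $H^p$-identity on $\mathbb{T}$ before projecting — both of which are standard. The conceptual content is entirely concentrated in the coanalytic eigenvector relation \eqref{E:toeplitz-coanalytic}, which is exactly what makes $P(\bar b_\lambda k_\lambda)$ vanish.
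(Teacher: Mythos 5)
Your argument is correct and is essentially the paper's own proof: both extract the zero at $\lambda$ via the F.~Riesz divisibility step to write $f-(1-|\lambda|^2)f(\lambda)k_\lambda=b_\lambda g$ with $g\in H^p$, and both identify $g=T_{\bar b_\lambda}f$ by multiplying by $\bar b_\lambda$ on $\mathbb T$, projecting, and using $T_{\bar b_\lambda}k_\lambda=\overline{b_\lambda(\lambda)}\,k_\lambda=0$. The only difference is cosmetic (the paper phrases the projection step as $g=T_{\bar b_\lambda}(b_\lambda g)$ rather than applying $P$ to the rearranged boundary identity), so nothing further is needed.
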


\begin{proof}
It is easy to verify that
\[
\overline{b_{\lambda}(z)} \, k_{\lambda}(z) = \frac{-\overline{z}}{1-\lambda \overline{z}}, \qquad z \in \mathbb{T},
\]
and thus
\begin{equation}\label{E:tbarblambda}
T_{\overline{b}_{\lambda}}(k_\lambda)=0.
\end{equation}
Another straightforward, but indirect, method is to use \eqref{E:toeplitz-coanalytic} to immediately arrive at the above relation.

Given $f \in H^p$, put
\begin{equation}\label{E:tbarblambda1}
g:=f-(1-|\lambda|^2)f(\lambda)k_\lambda.
\end{equation}
Then $g \in H^p$ and $g(\lambda)=0$. Hence, for some $h \in H^p$, we must have
\begin{equation}\label{E:tbarblambda1a}
f -(1-|\lambda|^2)f(\lambda)k_\lambda= b_{\lambda}h.
\end{equation}
This is part of the F. Riesz technique for extracting the zeros of an $H^p$-function and constitutes the preliminary step for the canonical factorization theorem. Then, in light of \eqref{E:tbarblambda}, we have
\begin{eqnarray*}
h = T_{\overline{b}_{\lambda}}(b_\lambda h)
= T_{\overline{b}_{\lambda}}\left( f -(1-|\lambda|^2)f(\lambda)k_\lambda\right)
= T_{\overline{b}_{\lambda}}f -(1-|\lambda|^2)f(\lambda) T_{\overline{b}_{\lambda}}k_{\lambda} = T_{\overline{b}_{\lambda}}f.
\end{eqnarray*}
Therefore, we can rewrite \eqref{E:tbarblambda1a} as $f =  (1-|\lambda|^2)f(\lambda) k_{\lambda} + b_{\lambda} T_{\overline{b}_{\lambda}}f$.
\end{proof}
Using \eqref{eq:sdqsdqs23}, we may also write \eqref{E:proj-kb} as
\begin{equation}\label{E:proj-kb2}
f =  f(\lambda) (1- \overline{\lambda} b_{\lambda}) + b_{\lambda} T_{\overline{b}_{\lambda}}f .
\end{equation}

\begin{rem}
It should be noted that, even if the Riesz projection $P$ is not bounded on $L^\infty(\mathbb{T})$, the identity \eqref{E:proj-kb2} implies that $T_{\overline{b}_\lambda}$ is bounded from $H^\infty$ to itself. Indeed, for every $f\in H^\infty$, observe that the function
$g=f-f(\lambda)(1-\overline{\lambda}b_\lambda)$ is in $H^\infty$ and vanishes at $\lambda$. Hence $(f-f(\lambda)(1-\overline{\lambda}b_\lambda))/b_\lambda$ is also in $H^\infty$ with the same norm as $g$. In particular, we get $T_{\overline{b}_\lambda}f\in H^\infty$, and
\[
\|T_{\overline{b}_\lambda}f\|_\infty=\|f-f(\lambda)(1-\overline{\lambda}b_\lambda)\|_\infty\leq 3\|f\|_\infty.
\]
If $B_n=\displaystyle\prod_{k=1}^n b_{\lambda_k}$ is a finite Blaschke product, since $T_{\overline{B}_n}=T_{\overline{b}_{\lambda_1}}\circ\dots\circ T_{\overline{b}_{\lambda_n}}$, we immediately see that $T_{\overline{B}_n}$ is also bounded from $H^\infty$ to itself. This is not true for a general symbol $\phi$ in $H^\infty$. Even, there are symbols $\phi$ in the disk algebra such that $T_{\overline\phi}$ is not bounded on $H^\infty$ (see, for example, Th.6.6.11 in \cite{MR2215991}). However, Lemma \ref{L:Estim_NormSup} gives a positive result in this direction for the subclass $ \mathrm{Hol}(\overline{\mathbb{D}})$.
\end{rem}

For a function $f$ and $s\ge0$, we use the notation $f_s$ for $f_s(z)=f(sz)$ (when it is well-defined). With this notation, observe that, if $f\in H^p$, $g\in H^q$ and $0<\rho<1$, then
\begin{equation}\label{E:Mreisz-2}
\langle f_\rho,g \rangle = \langle f,g_\rho \rangle.
\end{equation}
An important consequence for us of this equation will be the following lemma, where for a continuous function $g$ on $R\overline{\mathbb{D}}$, we write
\[
\|g\|_{R\mathbb{D}}=\sup_{|z|\leq R}|g(z)|.
\]

\begin{lem} \label{L:Estim_NormSup}
Let $f \in  \mathrm{Hol}(\overline{\mathbb{D}})$,  let $\phi \in H^\infty$, and let $R_0>1$  be chosen so that $f$ is analytic on the disk $R_0\mathbb{D}$. Then $T_{\overline{\phi}} f \in  \mathrm{Hol}(R_0\mathbb D)$. Moreover, for every $1\leq R'<R<R_0,$ and $\rho=1/R$, we have
\begin{equation}\label{Eq:Estim_NormSup}
    \|T_{\overline{\phi}}f\|_{R'\mathbb{D}}~\le~\frac{R}{R-R'}\|\phi_{\rho}\|_\infty\|f_R\|_{1}.
\end{equation}
\end{lem}

\begin{proof}
Fix $z\in\mathbb{D}$. Then, using \eqref{E:Mreisz-2},
\begin{align*}
T_{\overline{\phi}}f(z)~&=~\langle f,\phi k_z\rangle~=~\langle (f_R)_\rho,\phi k_{z} \rangle\\&=
~\langle f_R,\phi_\rho k_{\rho z} \rangle~=~\int_{\mathbb{T}}\frac{f_R(\zeta)\overline{\phi_\rho(\zeta)}}{1-\rho z\overline{\zeta}}\,\mathrm dm(\zeta).
\end{align*}
From this integral representation it follows that $T_{\overline{\phi}}f$ has an analytic extension to the larger disk $R\mathbb D$, for every $1<R<R_0$. Hence, it follows that $T_{\overline{\phi}}f\in \mathrm{Hol}(R_0\mathbb D)$. Moreover, we have
\begin{eqnarray*}
\|T_{\overline{\phi}}f\|_{R'\mathbb{D}}
&\le& \int_{\mathbb{T}} |f_R(\zeta)| \, \|\phi_\rho\|_\infty\left\|\frac{1}{1-\rho\overline{\zeta}z}\right\|_{R'\mathbb{D}}\,\mathrm dm(\zeta)\\
&\le& \frac 1{1-\rho R'}\|\phi_\rho\|_\infty\|f_R\|_{1}\\
&=&\frac{R}{R-R'}\|\phi_\rho\|_\infty\|f_R\|_{1}.
\end{eqnarray*}
\end{proof}

\begin{rem}\label{Rk:ContCUK}
Note that, for every $f\in H^\infty$ and every $\lambda\in\mathbb D$, thanks to \eqref{E:proj-kb2}, for every $z\in\mathbb D$ with $z\neq\lambda$, we have 
\[T_{\overline b_\lambda}f(z)=\frac{f(z)-f(\lambda)(1-\overline \lambda b_\lambda(z))}{b_\lambda(z)}=\frac{f(z)-f(\lambda)}{b_\lambda(z)}+\overline\lambda f(\lambda)=(|\lambda|^2-1)\frac{f(z)-f(\lambda)}{z-\lambda}+\overline\lambda f(z).\]
Using the difference quotient mapping $Q_\lambda$, this relation can be written more compactly as  
\[T_{\overline b_\lambda}= (|\lambda|^2-1)Q_\lambda+\overline\lambda I.\]
This observation shows that $T_{\overline b_\lambda}$ can be extended into a (unique) continuous linear map from $ \mathrm{Hol}(\mathbb{D})$ to itself, where $\mathrm{Hol}(\mathbb{D})$ is equipped with the topology of uniform convergence on compact subsets of $\mathbb{D}$. Thus, $T_{\overline B_n}$ can also be continuously extended from $\mathrm{Hol}(\mathbb{D})$ to itself, whenever $B_n$ is a finite Blaschke product.
\end{rem}
As a direct consequence of this remark, we have the following result. 

\begin{prop}\label{Prop:unicity}Let $(\lambda_n)_{n\geq 1}$ be a sequence in $\mathbb{D}$, define $B_n,\,n\ge0$ as in \eqref{D:def-Bn}.
Let $f\in  \mathrm{Hol}(\mathbb{D})$ and suppose that there exists a sequence $(a_n)_{n\ge0}$ such that 
\begin{equation}\label{E:explicit-coefficients} 
f=\sum_{n=0}^\infty a_nB_n,
\end{equation}
where the series converges in $\mathrm{Hol}(\mathbb{D})$ (i.e., uniformly on every compact subset of $\mathbb{D}$). Then the coefficients $(a_n)_{n\geq 0}$ are unique and given by 
\begin{equation}\label{Eq:coefs}
    a_n~=~ (T_{\overline{B}_{n}}f)(\lambda_{n+1}) - \overline{\lambda}_n (T_{\overline{B}_{n-1}}f)(\lambda_n),\qquad n\geq 0, 
\end{equation}
where $B_{-1}=0$. 
\end{prop}

\begin{proof}
Let $f\in  \mathrm{Hol}({\mathbb{D}})$, and assume that there exists a sequence $(a_n)_{n\geq 0}$ of complex numbers such that \eqref{E:explicit-coefficients} holds. By Remark \ref{Rk:ContCUK}, we have
\[
T_{\overline B_k}f=\sum_{n=0}^\infty a_n T_{\overline B_k}B_n,
\]
where the series also converges in $\mathrm{Hol}(\mathbb{D})$, and then in particular pointwise. 
Observe that $\overline{B_k/B_n}\in\overline{H^\infty}$ for $n\leq k$, and $B_n/B_k\in H^\infty$ for $n>k$. Hence, $P(\overline{B_k/B_n}) = \overline{(B_k/B_n)(0)}$ for $n\leq k$, and $P(B_n/B_k)=B_n/B_k$ for $n>k$, which gives
\[
T_{\overline B_k}B_n=\begin{cases}
\overline{(B_k/B_n)(0)},&\text{if }n\leq k,\\
B_n/B_k, &\text{if }n>k.
\end{cases}
\]
We then deduce that
\[
T_{\overline B_k}f=\sum_{n=0}^k a_n\overline{(B_k/B_n)(0)}+\sum_{n=k+1}^{\infty}a_nB_n/B_k.
\]
Since $(B_n/B_k)(\lambda_{k+1})=0$ for $n\geq k+1$, we get
\[
(T_{\overline B_k}f)(\lambda_{k+1})=\sum_{n=0}^k a_n\overline{(B_k/B_n)(0)}
\]
Use this equality to write
\begin{eqnarray*}
&& (T_{\overline B_k}f)(\lambda_{k+1})-\overline\lambda_k (T_{\overline B_{k-1}}f)(\lambda_{k})\\
&=& \sum_{n=0}^k a_n\overline{(B_k/B_n)(0)} - \overline\lambda_k\sum_{n=0}^{k-1} a_n\overline{(B_{k-1}/B_n)(0)}\\
&=&\sum_{n=0}^{k-1}a_n\left(\overline{(B_k/B_n)(0)}-\overline\lambda_k\overline{(B_{k-1}/B_n)(0)}\right) + a_k\overline{(B_k/B_k)(0)}.
\end{eqnarray*}
Observe now that $\overline{B_k/B_k}=1$ and $\overline {B_k/B_n}=\overline b_{\lambda_k}\overline {B_{k-1}/B_n}$, whence
\[
\overline{(B_k/B_n)(0)}-\overline\lambda_k\overline{(B_{k-1}/B_n)(0)}=\overline{b_{\lambda_k}(0)}\overline{(B_{k-1}/B_n)(0)}-\overline\lambda_k\overline{(B_{k-1}/B_n)(0)}=0,
\]
since $b_{\lambda_k}(0)=\lambda_k$. Hence, $a_k=(T_{\overline B_k}f)(\lambda_{k+1})-\overline\lambda_k (T_{\overline B_{k-1}}f)(\lambda_{k})$, which reveals the uniqueness of the coefficients $a_n$, $n\geq 0$, and this concludes the proof.
\end{proof}
\begin{rem}
If the sequence $(\lambda_n)_{n\geq 1}$ is a sequence in $\mathbb{D}$ of distinct points, then we have a simpler proof of the unicity which only uses the pointwise convergence. Indeed, let $f\in  \mathrm{Hol}(\mathbb{D})$ and assume that there exists a sequence $(a_n)_{n\geq 0}$ of complex numbers such that we have the representation \eqref{E:explicit-coefficients}, where the series is pointwise convergent. In particular, we can evaluate the identity  \eqref{E:explicit-coefficients} at $\lambda_k$, $k\geq 1$. Hence,
\[
f(\lambda_k)=\sum_{n=0}^{k-1}a_n B_n(\lambda_k),
\]
since $B_n(\lambda_k)=0$ for $n\geq k$. Thus, the sequence $(a_0,a_1,\dots,a_{k-1})$ is a solution of the linear system $A a=b$, where
\[
a=\begin{pmatrix}a_0\\a_1\\\vdots\\a_{k-1}
\end{pmatrix},
\,\,
b=\begin{pmatrix}f(\lambda_1)\\f(\lambda_2)\\\vdots \\ f(\lambda_k)
\end{pmatrix},
\,\, \text{and} \,\,
A=\begin{pmatrix}
1&0&\dots&0\\
1&B_1(\lambda_2)&\dots&0\\
\vdots& \vdots &\ \ddots& \vdots\\
1&B_1(\lambda_k)&\dots&B_{k-1}(\lambda_k)
\end{pmatrix}.
\]
Since $A$ is lower triangular with non-zero elements on its diagonal, $A$ is invertible and thus the solution of the system is unique.
\end{rem}

\section{Completeness of the sequence of finite Blaschke products}\label{S:completeness}

Before studying the property of being a Schauder basis, it is natural to first study the completeness. The following result gives the characterization of the completeness of  the family $(B_n)_{n\geq 0}$ in $\mathcal{A}(\mathbb{D})$ and $H^p$.
\begin{thm}\label{prop:completude-disc-algebra}
Let $(\lambda_n)_{n\geq 1}$ be a sequence of points in $\mathbb{D}$, define $B_n,\,n\ge0$, as in \eqref{D:def-Bn}. Let $1\leq p<\infty$. Then the following assertions are equivalent.
\begin{enumerate}
\item[(i)] The sequence $(B_n)_{n\geq 0}$ is complete in $\mathcal{A}(\mathbb{D})$.
\item[(ii)] The sequence $(B_n)_{n\geq 0}$ is complete in $H^p$.
\item[(iii)] The sequence $(\lambda_n)_{n\geq 1}$ is non-Blaschke, i.e., $\sum_{n=1}^{\infty}(1-|\lambda_n|)=\infty$.
\end{enumerate}
In particular, if $(\lambda_n)_{n\geq 1}$ is a 
Blaschke sequence, then the sequence $(B_n)_{n\geq 0}$ cannot be complete in $\mathrm{Hol}(\overline{\mathbb{D}})$ equipped with the topology induced by $\mathcal A(\mathbb D)$ or $H^p$.
\end{thm}

\begin{proof}
$(i)\implies (iii):$ Argue by contradiction and assume that  $(B_n)_{n\geq 1}$ is complete in $\mathcal{A}(\mathbb{D})$ and  $\sum_{n=1}^{\infty}(1-|\lambda_n|)<\infty$. Then we can consider the infinite Blaschke product $B$ associated with $(\lambda_n)_{n\geq 1}$. Consider the linear form 
\[
\varphi(f)=\int_{\mathbb{T}}f(\zeta)\overline{\zeta B(\zeta)}\,\mathrm dm(\zeta),\qquad f\in \mathcal{A}(\mathbb{D}).
\]
It is easy to check that $\varphi$ is an element of the dual of $\mathcal{A}(\mathbb{D})$ that is not identically zero (for instance if $\lambda\neq \lambda_n$, $n\geq 1$, $\varphi(zk_\lambda)=\overline{B(\lambda)}\neq 0$). On the other hand, for every $n\geq 1$, we have 
\[
\varphi(B_n)=\int_{\mathbb{T}}B_n(\zeta)\overline{\zeta B(\zeta)}\,\mathrm dm(\zeta)=\overline{\left(z\frac{B}{B_n}\right)(0)}=0.
\]
This contradicts the completeness of $(B_n)_{n\geq 1}$.
\par\smallskip
$(iii)\implies (i):$ Assume that $\sum_{n=1}^{\infty}(1-|\lambda_n|)=\infty$. Let $\varphi$ be in the dual of $\mathcal{A}(\mathbb{D})$ and assume that $\varphi(B_n)=0$ for every $n\geq 1$. Using the Hahn-Banach theorem, we can extend $\varphi$ into a continuous linear form on $\mathcal{C}(\mathbb{T})$, and then by the Riesz representation theorem there exists a complex measure $\mu$ on $\mathbb{T}$ such that 
\[
\varphi(f)=\int_{\mathbb{T}}f(\zeta)\,d\mu(\zeta),\qquad f\in \mathcal{A}(\mathbb{D}).
\]
\begin{fact}\label{fact-vect}
We have $\mathrm{vect}(k_{\lambda_n}:n\geq 1)\subset \mathrm{vect}(B_n:n\geq 0)$.
\end{fact}
\begin{proof}Since $\sum_{n=1}^\infty (1-|\lambda_n|)=\infty$, the sequence $(\phi_n)_{n\geq 1}$ given by \eqref{eq:Takneka-basis} is an orthonormal basis of $H^2$. Using \eqref{eq:sdqsdqs23}, we have 
\[
\phi_n=(1-|\lambda_n|^2)^{-1/2}(B_{n-1}-\overline\lambda_n B_n),
\]
 and in particular, for every $n\geq 0$, we get
\[
\mathrm{vect}(\phi_1,\dots,\phi_n)\subset \mathrm{vect}(B_0,B_1,\dots,B_n).
\]
Assume now $\lambda\in\{\lambda_n:n\geq 1\}$. Then there exists $k\geq 2$ such that $\phi_{k-1}(\lambda)\neq 0$ and $\phi_k(\lambda)=0$. This implies that for every $n\geq k$, $\phi_n(\lambda)=0$. Hence
\[
k_\lambda\in \left(\mathrm{vect}(\phi_n:n\geq k)\right)^\perp=\mathrm{vect}(\phi_1,\dots,\phi_{k-1}),
\]
where here the symbol $\perp$ means the orthogonal complement in $H^2$. Finally, we get $k_\lambda\in\mathrm{vect}(B_n:n\geq 0)$, which proves the Fact \ref{fact-vect}.
\end{proof}
Let us mention that the proof of Fact \ref{fact-vect} does not really use the property that $(\lambda_n)_{n\geq 1}$ is a non-Blaschke sequence: indeed, for a fixed $l\ge1$, the inclusion $\mathrm{vect}(k_{\lambda_1},\dots,k_{\lambda_l})\subset \mathrm{vect}(B_0,\dots,B_l)$ depends only on the first terms and if necessary, we can consider a non-Blaschke sequence $(\widetilde \lambda_n)$ such that $\widetilde \lambda_j=\lambda_j$ for all $j\le l$ to obtain this inclusion.
\par\medskip
Let us now return to the proof of Proposition \ref{prop:completude-disc-algebra}. Since $\varphi(B_n)=0$ for every $n\geq 0$, we get $\varphi(k_{\lambda_n})=0$ for every $n\geq 1$. This means that 
\[
\int_{\mathbb{T}}\frac{1}{1-\overline{\lambda_n}\zeta}\,\mathrm d\mu(\zeta)=0.
\]
If we denote by $C_{\overline \mu}$ the Cauchy transform of $\overline{\mu}$, the last identity means that $C_{\overline\mu}(\lambda_n)=0$ for every $n\geq 1$. But $C_{\overline \mu}\in H^p$ for every $0<p<1$ (see \cite{MR2215991}*{Theorem 2.1.10}). Hence, since $(\lambda_n)_{n\geq 1}$ is a non-Blaschke sequence, we deduce $C_{\overline \mu}=0$, and thus $\mu=0$. This proves the completeness of $(B_n)_{n\geq 0}$.

The equivalence between $(ii)$ and $(iii)$ follows the same lines.
\end{proof}

\section{The Schauder basis for $\mathrm{Hol}(\overline{\mathbb{D}})$} \label{S:schauder-basis}
According to Theorem~\ref{prop:completude-disc-algebra}, in our study of Schauder bases, we shall consider sequences $(\lambda_n)_{n\geq 1}$ which do not satisfy the Blaschke condition.

We are ready to present a sequence of finite Blaschke products which forms a Schauder basis for $\mathrm{Hol}(\overline{\mathbb{D}})$ equipped with the $H^\infty$-norm.

\begin{thm}\label{TheoX}
Let $(\lambda_n)_{n\geq 1}$ be a non-Blaschke sequence in $\mathbb{D}$, define $B_n,\,n\ge0$ as in \eqref{D:def-Bn}. Then, for each $f\in  \mathrm{Hol}(\overline{\mathbb{D}})$, there exists a unique sequence of scalars $(a_n)_{n\geq 0}$ such that we have
\begin{equation}\label{E:explicit-coefficients2} 
f=\sum_{n=0}^\infty a_nB_n,
\end{equation}
where the series converges uniformly on $\overline{\mathbb D}$ and the coefficients $a_n$ are given by \eqref{Eq:coefs}.

In particular, the sequence $(B_n)_{n\geq 0}$ is a Schauder basis for $ \mathrm{Hol}(\overline{\mathbb{D}})$ equipped with the $H^\infty$-norm.
\end{thm}

\begin{proof}
Since the uniform convergence on $\overline{\mathbb D}$ implies the convergence in $ \mathrm{Hol}(\mathbb{D})$, the unicity of the coefficients $(a_n)_{n\geq 0}$ is given by Proposition \ref{Prop:unicity}.
Let us prove the existence of the representation \eqref{E:explicit-coefficients2}. For this purpose, we shall prove that the series $\sum a_n B_n$ converges to $f$ uniformly on $\overline{\mathbb D}$ where the coefficients $a_n$ are given by \eqref{Eq:coefs}. Repeated application of the functional equation \eqref{E:proj-kb2}, with different values for $\lambda$ and $f$ in each step, gives
\begin{eqnarray*}
f &=&   T_{\overline{B}_{0}}f(\lambda_1) \, (1- \overline{\lambda}_1 b_{\lambda_1}) + b_{\lambda_1} T_{\overline{B}_{1}}f,\\
T_{\overline{B}_{1}}f &=&  T_{\overline{B}_{1}}f(\lambda_2) \, (1- \overline{\lambda}_2 b_{\lambda_2}) + b_{\lambda_2} T_{\overline{B}_{2}}f,\\
&\vdots&\\
T_{\overline{B}_{N-1}}f &=&  T_{\overline{B}_{N-1}}f(\lambda_{N}) \, (1- \overline{\lambda}_N b_{\lambda_N}) + b_{\lambda_{N}} T_{\overline{B}_{N}}f.
\end{eqnarray*}
Hence, noting that $B_{n-1}b_{\lambda_n} = B_{n}$, after some telescoping eliminations, we obtain
\begin{equation}\label{E:our-B-rep-0}
f = \sum_{n=1}^{N} (T_{\overline{B}_{n-1}}f)(\lambda_{n}) \, \left( B_{n-1} - \overline{\lambda}_n B_n\right) + B_{N} T_{\overline{B}_{N}}f.
\end{equation}
Then, rearranging the terms leads to
\begin{equation}\label{E:our-B-rep}
f = \sum_{n=0}^{N-1} \bigg( (T_{\overline{B}_{n}}f)(\lambda_{n+1}) - \overline{\lambda}_n (T_{\overline{B}_{n-1}}f)(\lambda_n) \bigg) B_n + R_Nf,
\end{equation}
where $B_{-1}=0$ and the remainder is
\begin{equation}\label{E:remainder}
R_Nf := \big( - \overline{\lambda}_N (T_{\overline{B}_{N-1}}f)(\lambda_N) +  T_{\overline{B}_{N}}f \big) B_{N}.
\end{equation}
We need to show that, for any fixed $f \in  \mathrm{Hol}(\overline{\mathbb{D}})$,
\begin{equation}\label{E:remainder-to0}
\lim_{N \to \infty} \|R_Nf\|_{\infty} = 0,
\end{equation}
which implies, with \eqref{E:our-B-rep}, the validity of the series representation \eqref{E:explicit-coefficients2}, with convergence in $H^\infty$. 

Let $f \in  \mathrm{Hol}(\overline{\mathbb{D}})$ and let $R_0>1$ be a dilation factor such that $f_{R_0} \in   \mathrm{Hol}(\overline{\mathbb{D}})$. Then, according to Lemma \ref{L:Estim_NormSup} (with $R'=1)$ and \eqref{E:remainder}, for every $1<R<R_0$, we have
\begin{equation}\label{Eq:BorneUnif}
\|R_Nf\|_\infty ~\le~\left|T_{\overline{B}_{N-1}}f(\lambda_N)\right|+\|T_{\overline{B}_N}f\|_\infty\le~\frac{2R}{R-1}\|f_R\|_{1}\sup_{\frac1R\mathbb{D}}|B_{N-1}|.
\end{equation}
But since $(\lambda_n)_{n\geq 1}$ is a non-Blaschke sequence, it follows that $B_n\longrightarrow0$ uniformly on every compact subset of $\mathbb{D}$. In particular $\sup_{\frac1R\mathbb{D}}|B_{N-1}|\longrightarrow0$ as $N\to \infty$, and thus 
\[\lim_{N \to \infty} \|R_Nf\|_{\infty} = 0.\qedhere\]
\end{proof}

Thus, we immediately obtain the following.
\begin{cor}\label{Cor:hp-blaschke}
Let $X$ be a Banach space of analytic functions on $\mathbb{D}$ that satisfies 
\[
\mathcal{A}(\mathbb{D}) \lhook\joinrel\longrightarrow X
\lhook\joinrel\longrightarrow  \mathrm{Hol}(\mathbb{D}). 
\]
More explicitly, 
\begin{enumerate}[(i)]
\item\label{Hyp2} the disk algebra $\mathcal{A}(\mathbb{D})$ is continuously embedded in $X$,
\item\label{Hyp1} and $X$ is continuously embedded in $ \mathrm{Hol}(\mathbb{D})$.
\end{enumerate}
Let $(\lambda_n)_{n\geq 1}$ be a non-Blaschke sequence in $\mathbb{D}$, and define $B_n,\,n\ge0$ as in \eqref{D:def-Bn}.
Then the sequence of finite Blaschke products $(B_n)_{n\ge0}$ forms a Schauder basis of $ \mathrm{Hol}(\overline{\mathbb{D}})$ equipped with the norm of $X$.
\end{cor}
\begin{proof}
It follows from \ref{Hyp2} that the uniform convergence in $\mathbb D$ implies the convergence in $X$, and thus the existence of the decomposition is given by Theorem \ref{TheoX}. The unicity of the coefficients in the decomposition follows from Proposition \ref{Prop:unicity}.
\end{proof}

\begin{ex}
Most of the relevant classical function spaces satisfy the hypothesis of Corollary~\ref{Cor:hp-blaschke}, such as the Hardy spaces $H^p$, the space BMOA, the weighted Bergman spaces $A_\alpha^p$, $1\leq p\leq \infty$, $\alpha>-1$, and of course the disk algebra $\mathcal{A}(\mathbb{D})$.
\end{ex}

As far as the property of being a Schauder basis for $ \mathrm{Hol}(\overline{\mathbb{D}})$ equipped with its natural topology, we have a complete characterization. Recall that convergence in this topology corresponds to uniform convergence in a neighborhood of the closed unit disk, or equivalently in a disk of the form $R'\mathbb D$ for some $R'>1$.

\begin{thm}\label{thm:base-de-schauder-HolDbar}
Let $(\lambda_n)_{n\geq 1}$ be a sequence of points in $\mathbb D$, and let $(B_n)_{n\geq 0}$ be given by \eqref{D:def-Bn}. Then the following assertions are equivalent.
\begin{enumerate}
\item[(i)] The sequence $(B_n)_{n\geq 1}$ is a Schauder basis for $ \mathrm{Hol}(\overline{\mathbb{D}})$ equipped with its natural topology. 
\item[(ii)] We have $\sup_{n\geq 1}|\lambda_n|<1$.
\end{enumerate}
\end{thm}

\emph{Proof.}$\,\,$ $(i)\implies (ii)$: If $\sup_{n\geq 1}|\lambda_n| = 1$, then the series $\sum_{n} a_n B_n$ converges in the natural topology of $ \mathrm{Hol}(\overline{\mathbb{D}})$ if and only if the sequence $(a_n)_{n\geq 1}$ is of finite support and, consequently, in this case, $(B_n)_{n\geq 0}$ cannot be a Schauder basis.

\par\medskip
$(ii)\implies (i)$: Assume now that $\sup_{n\geq 1}|\lambda_n|<1$. The proof starts by following the same arguments as in the proof of Theorem~\ref{TheoX}.  We need to show that, for each $f\in  \mathrm{Hol}(\overline{\mathbb{D}})$, there exist  $R'>1$ and a unique sequence of scalars $(a_n)_{n\geq 0}$ such that we have
\[f=\sum_{n=0}^\infty a_nB_n,\]
where the series converges uniformly on $R'\mathbb{D}$. The unicity of the decomposition is given by Proposition \ref{Prop:unicity}, and for the existence, we will show that 
$R_N f$ converges uniformly to $0$ in $R'\mathbb D$. However, this time, we will have $\|B_N\|_{R'\mathbb{D}}$ in the estimate obtained, and this term is generally unbounded. The following lemma will allow us to overcome this difficulty. 



\begin{lem}\label{lem:sdsdqsd232E}
Let $R_0>1$. Fix any $1<R<R_0$ and put $\rho=1/R$. Then there exist $1<R'<R$ and $\delta\in (0,1)$ such that, for every $\lambda$ satisfying $|\lambda|\leq 1/R_0$, we have
\[
\|b_\lambda\|_{\rho \mathbb{D}} \|b_\lambda\|_{R' \mathbb{D}}\leq \delta.
\]
\end{lem}

\begin{proof}
For every $0<r<1/|\lambda|$ and every $\zeta\in\mathbb{T}$, we have 
\[
|b_\lambda(r\zeta)|^2=1-\frac{(1-r^2)(1-|\lambda|^2)}{|1-r\overline\lambda \zeta|^2}.
\]
If $0<r\leq 1$, it is easy to see that 
\[
\sup_{\zeta\in\mathbb{T}}|b_\lambda(r\zeta)|^2=1-\frac{(1-r^2)(1-|\lambda|^2)}{(1+|\lambda|r)^2}=\frac{(|\lambda|+r)^2}{(1+|\lambda|r)^2},
\]
and if $1<r<1/|\lambda|$, we have 
\[
\sup_{\zeta\in\mathbb{T}}|b_\lambda(r\zeta)|^2=1-\frac{(1-r^2)(1-|\lambda|^2)}{(1-|\lambda|r)^2}=\frac{(r-|\lambda|)^2}{(1-|\lambda|r)^2}.
\]
Hence, according to the maximum principle, we get 
\begin{equation}\label{eq:sdqsdq23ZEE0SDMMMM}
\|b_\lambda\|_{r\mathbb{D}}=\begin{cases}
\displaystyle\frac{|\lambda|+r}{1+|\lambda|r}, &\text{if }0<r\leq 1,\\
\displaystyle\frac{r-|\lambda|}{1-|\lambda|r}, & \text{if }1<r<\frac{1}{|\lambda|}.
\end{cases}
\end{equation}
 Then it follows that, for all $1<R'<R<R_0$, $\rho=1/R$, and $|\lambda|\leq 1/R_0$, we have 
\[
\|b_\lambda\|_{\rho\mathbb{D}}\|b_\lambda\|_{R'\mathbb{D}}=\frac{|\lambda|+\rho}{1+|\lambda|\rho}\cdot \frac{R'-|\lambda|}{1-|\lambda|R'}=\frac{1+R|\lambda|}{R+|\lambda|}\cdot \frac{R'-|\lambda|}{1-|\lambda|R'}.
\]
Thus,
\begin{align*}
1 - \|b_\lambda\|_{\rho\mathbb{D}}\|b_\lambda\|_{R'\mathbb{D}}
=&
\frac{(R-R')|\lambda|^2-2(RR'-1)|\lambda|+(R-R')}{(R+|\lambda|)(1-|\lambda|R')}\\
\geq & \frac{(R-R')|\lambda|^2-2(RR'-1)|\lambda|+(R-R')}{2R_0}.
\end{align*}
The quadratic equation $(R-R')t^2-2(RR'-1)t+(R-R')=0$ has two positive zeros $0<t_1<1$ and $t_2 = 1/t_1>1$. More explicitly,
\[
t_1 = \frac{R-R'}{RR'-1 + \sqrt{(R^2-1)(R'^2-1)}},
\]
which shows $t_1 \to 1^{-}$ as $R' \to 1^+$. Choose $R'$ close enough to $1^+$ so that $1/R_0 < t_1$. Hence, for every $\lambda$ satisfying $|\lambda|\leq 1/R_0$, we have
\begin{align*}
1 - \|b_\lambda\|_{\rho\mathbb{D}}\|b_\lambda\|_{R'\mathbb{D}}
&\geq
\frac{(R-R')/R_0^2-2(RR'-1)/R_0+(R-R')}{2R_0}\\
&= \frac{(R-R')(1+R_0^2)-2R_0(RR'-1)}{2R_0^3}\\
&\geq \frac{(R-R')(R_0-1)^2}{2R_0^3}.
\end{align*}
This concludes the proof of the lemma with $\delta=1-\frac{(R-R')(R_0-1)^2}{2R_0^3}.$
\end{proof}

\noindent
Let us now come back to the proof of the implication $(ii)\implies (i)$ of Theorem~\ref{thm:base-de-schauder-HolDbar}. As already indicated, the unicity of the coefficients $(a_n)_{n\geq 0}$ is given by Proposition \ref{Prop:unicity}. Now, since $f\in  \mathrm{Hol}(\overline{\mathbb{D}})$ and $\sup_{n\geq 1}|\lambda_n|<1$, we can find $R_0>1$ such that $\sup_{n\geq 1}|\lambda_n|<1/R_0$ and $f\in  \mathrm{Hol}(R_0 \mathbb{D})$. Take $1<R'<R<R_0$ and $\delta$ as given by Lemma~\ref{lem:sdsdqsd232E}. Then, according to \eqref{E:our-B-rep}, it is sufficient to prove that 
\[
\lim_{N\to \infty}\|R_Nf\|_{R'\mathbb{D}}=0,
\]
where $R_Nf$ is given by \eqref{E:remainder}.  But it follows from Lemma \ref{L:Estim_NormSup} that 
\[
\|R_Nf\|_{R'\mathbb{D}}\leq \frac{R}{R-R'}\|f_R\|_{1}\left(\|B_{N-1}\|_{\rho\mathbb{D}}+\|B_{N}\|_{\rho\mathbb{D}}\right)\|B_N\|_{R'\mathbb{D}},
\]
where $\rho=1/R$. Using \eqref{eq:sdqsdq23ZEE0SDMMMM} and Lemma~\ref{lem:sdsdqsd232E}, we get
\begin{align*}
\|R_Nf\|_{R'\mathbb{D}}& \leq \frac{R}{R-R'}\|f_R\|_{1}\left(\delta^{N-1}\frac{R'-|\lambda_N|}{1-|\lambda_N|R'}+\delta^N\right)\\
&\leq \frac{R}{R-R'}\|f_R\|_{1}\left(\frac{R_0^2}{R_0-R'}\delta^{N-1}+\delta^N\right).
\end{align*}
Since $0<\delta<1$, we deduce that $\|R_Nf\|_{R'\mathbb{D}}\longrightarrow 0$ as $N\to\infty$, which concludes the proof.

 \hfill$\square$

\section{Characterization for the Hardy spaces}\label{sec:Hardy}
In Corollary \ref{Cor:hp-blaschke}, we have seen that if $(\lambda_n)_{n\geq 1}$ is a non-Blaschke sequence in $\mathbb{D}$, then the sequence $(B_n)_{n\geq 0}$ given by \eqref{D:def-Bn} is a Schauder basis for $ \mathrm{Hol}(\overline{\mathbb{D}})$ equipped with the $H^p$-norm. It is natural to ask if we can replace $ \mathrm{Hol}(\overline{\mathbb{D}})$ with $H^p$. The following result answers this question.

\begin{thm}\label{Th:CaracHp}
Let $1<p<\infty$, let $(\lambda_n)_{n\geq 1}$ be a  sequence in $\mathbb{D}$ and let $(B_n)_{n\geq 0}$ be given by \eqref{D:def-Bn}. Then $(B_n)_{n\geq 1}$ is a Schauder basis for $H^p$ if and only if $\sup_{n\geq 1}|\lambda_n|<1$.
\end{thm}
The key for the characterization given by Theorem \ref{Th:CaracHp}, is to relate the rest $R_Nf$ with the coefficients of $f$ in its development in the Takenaka-Malmquist basis $(\phi_n)_{n\geq 0}$ given by \eqref{eq:Takneka-basis}. 
\begin{proof}[Proof of Theorem \ref{Th:CaracHp}]
According to Theorem~\ref{prop:completude-disc-algebra}
 we may assume that $(\lambda_n)_{n\geq 1}$ is a non-Blaschke sequence. Since the convergence in $H^p$ implies the uniform convergence on every compact subset of $\mathbb{D}$, if $f=\sum_{n\geq 0} a_n B_n$ with the convergence in $H^p$, then, according to Proposition \ref{Prop:unicity}, the coefficients $a_n$ are given by 
\[
a_n~=~ (T_{\overline{B}_{n}}f)(\lambda_{n+1}) - \overline{\lambda}_n (T_{\overline{B}_{n-1}}f)(\lambda_n),\quad n\geq 0.
\]
Hence it follows from \eqref{E:our-B-rep} and \eqref{E:remainder} that $(B_n)_{n\geq 0}$ is a Schauder basis for $H^p$ if and only if for every  $f\in H^p$, $\|R_Nf\|_{p}\longrightarrow 0$ when $N\to\infty$, where
\begin{equation}\label{eq:sdqsds334ZEDDS}
R_Nf := - \overline{\lambda}_N (T_{\overline{B}_{N-1}}f)(\lambda_N)B_N +  B_NT_{\overline{B}_{N}}f.
\end{equation}
\par\smallskip
Let $f\in H^p$. Since $(\lambda_n)_{n\geq 1}$ is a non-Blaschke sequence, there exists a unique sequence $(c_n)_{n\ge1}$ such that 
\begin{equation}\label{Eq:DevfTM}
    f=\sum_{n=1}^\infty c_n\phi_n,
\end{equation}
where the series converges in the $H^p$-norm. Since for every $N\ge1$, the operators $T_{\overline{B}_N}$ are bounded on $H^p$, it follows that 
\[
T_{\overline{B}_N}f=\sum_{n=1}^\infty c_nT_{\overline{B}_N}\phi_n,\]
where the series is convergent in $H^p$. By Lemma \ref{Fact:CalculImBaseTM}, we get
\[B_NT_{\overline{B}_N}f=\sum_{n=1}^\infty c_nB_NT_{\overline{B}_N}\phi_n=\sum_{n=N+1}^\infty c_n\phi_n.\]
Since the series in \eqref{Eq:DevfTM} converges in the $H^p$-norm, it follows that 
\begin{equation}\label{eq:sdsqSdsq12dDF90sq}
\|B_NT_{\overline{B}_N}f\|_{p}=\left\|\sum_{n=N+1}^\infty c_n\phi_n\right\|_{p}\longrightarrow0~\qquad\text{when}~N\to\infty.
\end{equation}
Moreover, observe that 
\[(T_{\overline{B}_{N-1}}f)(\lambda_N)=\frac{c_N}{\sqrt{1-|\lambda_N|^2}}.
\]
Then it follows from \eqref{eq:sdqsds334ZEDDS} and \eqref{eq:sdsqSdsq12dDF90sq} that $(B_n)_{n\geq 0}$ is a Schauder basis for $H^p$ if and only if, for every $f\in H^p$, we have
\begin{equation}\label{eq:sdsd3EZ3SD099ER}
\frac{|\lambda_N c_N|}{\sqrt{1-|\lambda_N|^2}}\longrightarrow 0,\quad \text{as }N\to\infty,   
\end{equation}
where $(c_N)_{N\geq 0}$ are the coefficients in the decomposition \eqref{Eq:DevfTM} of $f$.
\par\medskip
Suppose first that $\sup_{n\geq 1}|\lambda_n|=r<1$. Let $f\in H^p$  and let $(c_n)_{n\ge1}$ as in \eqref{Eq:DevfTM}. Since the series converges in $H^p$, we have $\|c_N\phi_N\|_{p}\longrightarrow0$ when $N\to\infty$. Moreover, by \eqref{E:norme-cauchy-kernel}, we easily see that 
\[(1-|\lambda_N|^2)^{\frac12-\frac1q}\le \|\phi_N\|_{p}\le \kappa_p(1-|\lambda_N|^2)^{\frac12-\frac1q},\]
where $1/p+1/q=1$. In particular, since $\sup_{n\geq 1}|\lambda_n|<1$, we have $\inf_N\|\phi_N\|_{p}>0$, and thus $c_N\longrightarrow 0$ as $N\to\infty$. Then, it follows
\[
\frac{|\lambda_N c_N|}{\sqrt{1-|\lambda_N|^2}}\le \frac{|c_N|}{\sqrt{1-r^2}}\longrightarrow 0\quad\text{as }N\to \infty.\]
So \eqref{eq:sdsd3EZ3SD099ER} is satisfied and thus $(B_n)_{n\geq 0}$ is a Schauder basis for $H^p$.
\par\medskip
Suppose now that $\sup_{n\geq 1}|\lambda_n|=1$. Then it is sufficient to prove that there exists a lacunary sequence $(c_n)_{n\geq 1}$ such that  
\begin{equation}\label{eq:sdqssqd3EZESDSXVF00SDN}
\sum_{n=1}^\infty |c_n|\|\phi_n\|_{p}<\infty\quad\text{and}\quad\limsup_{N\to\infty}\frac{|\lambda_N c_N|}{\sqrt{1-|\lambda_N|^2}}=+\infty.
\end{equation}
Indeed, for such a choice of $(c_n)_{n\geq 1}$, the series  $f=\sum_{n=1}^\infty c_n\phi_n$ converges in $H^p$ and \eqref{eq:sdsd3EZ3SD099ER} is not satisfied for this function $f$, which proves that $(B_n)_{n\geq 0}$ is not a Schauder basis for $H^p$.
\par\smallskip
Let us now construct the sequence $(c_n)_{n\geq 1}$ satisfying \eqref{eq:sdqssqd3EZESDSXVF00SDN}.
Since $1/q-1<0$ and $\sup_{n\geq 1}|\lambda_n|=1$, it follows that $\limsup_{n\to\infty}|\lambda_n|(1-|\lambda_n|^2)^{1/q-1}=+\infty$. So there exists a subsequence $(\lambda_{n_k})_{k\geq 1}$ of $(\lambda_n)_{n\geq 1}$ such that 
\[|\lambda_{n_k}|(1-|\lambda_{n_k}|^2)^{1/q-1}\ge k^3,\qquad k\ge1.
\]
Now let $\alpha_{n_k}= 1/k^2$ for all $k\ge1$ and $\alpha_n=0$ if $n\neq n_k$ for any $k$. Then  the series $\sum_n \alpha_n$ converges. Write $c_n=\alpha_n (1-|\lambda_n|^2)^{1/q-1/2}$. Since  $\|\phi_n\|_{p}\le \kappa_p(1-|\lambda_n|^2)^{1/2-1/q}$, the series $\sum c_n\|\phi_n\|_{p}$ is also convergent. Moreover, 
\[|c_{n_k}\lambda_{n_k}|(1-|\lambda_{n_k}|^2)^{-1/2}=\alpha_{n_k}|\lambda_{n_k}|(1-|\lambda_{n_k}|^2)^{1/q-1}\ge k,\qquad k\ge1.
\]
Thus we deduce that $\limsup_{N\to\infty}|\lambda_N c_N|(1-|\lambda_N|^2)^{-1/2}=\infty$. So \eqref{eq:sdqssqd3EZESDSXVF00SDN} is satisfied and $(B_n)_{n\geq 0}$ is not a Schauder basis for $H^p$, which concludes the proof of Theorem \ref{Th:CaracHp}.
\end{proof}

\section{The case of the disk algebra} \label{S:Shaprness}
One might ask whether Corollary~\ref{Cor:hp-blaschke} can be extended to the whole Banach space $X$. The previous section answers this question in the case where $X=H^p$. In this section, we consider the case of the disk algebra. For the disk algebra, the situation is very different from what we obtained for Hardy spaces and also much more complicated. For example, for constant sequences $\lambda_n \equiv \lambda$, the sequence of Blaschke products $(B_n)_{n\geq 0}$ is not  a Schauder basis in $\mathcal{A}(\mathbb{D})$, as explained in the following remark.

\begin{rem}\label{rem-A(D)-suiteCte}
Recall that the sequence of monomials $(z^n)_{n\geq 0}$ is a Schauder basis for $H^p$, $1<p<\infty$ (see, for example, \cite{Livre-TaylorCoef}*{p.108}), but it is not a Schauder basis for $\mathcal{A}(\mathbb{D})$ (by \cite{Zygmund-livre}*{Th. VIII.1.14}). Recall also that $C_{b_\lambda}$, the composition operator with symbol $b_\lambda$, is continuous from $H^p$ to itself, $1<p<\infty$ (by the Littlewood principle), and from $\mathcal{A}(\mathbb{D})$ to itself as well, and satisfies $C_{b_\lambda}^2=I$. Thus, it follows that $(b_{\lambda}^n)_{n\geq 0}$ is a Schauder basis for $H^p$, $1<p<\infty$,  but is not a Schauder basis for $\mathcal{A}(\mathbb{D})$.
\end{rem}



Remark \ref{rem-A(D)-suiteCte} and Theorem \ref{prop:completude-disc-algebra} suggest considering the case where $(\lambda_n)_{n\geq 0}$ is a non-Blaschke sequence satisfying $\sup_{n\geq 1}|\lambda_n|=1$. The next result shows that this is not sufficient to ensure that the sequence $(B_n)_{n\geq 0}$ forms a Schauder basis for $\mathcal{A}(\mathbb{D})$. 

\begin{prop}\label{Prop:A(D)}
There exists a non-Blaschke sequence $(\lambda_n)_{n\geq 1}$ satisfying $\sup_{n\geq 1}|\lambda_n|=1$, such that 
the sequence of finite Blaschke products $(B_n)_{n\geq 0}$ does not form a Schauder basis for $\mathcal{A}(\mathbb{D})$, with $B_n,\,n\ge0$, as in \eqref{D:def-Bn}.
\end{prop}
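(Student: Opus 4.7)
My plan is to argue by contradiction via Banach-Steinhaus, combined with a perturbation of the classical Fejer-Lebesgue obstruction for the Taylor basis of $A(\mathbb D)$.

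Suppose $(B_n)_{n\ge 0}$ were a Schauder basis of $A(\mathbb D)$; then by Banach-Steinhaus the partial-sum projections $S_N:A(\mathbb D)\to A(\mathbb D)$ would satisfy $\sup_N\|S_N\|\le K<\infty$. For $f\in\mathrm{Hol}(\overline{\mathbb D})$, Theorem~\ref{TheoX} combined with the uniqueness part of Theorem~\ref{Cor:hp-blaschke} identifies $S_Nf=f-R_Nf$, with $R_N$ the remainder operator from \eqref{E:remainder}; the hypothesis would force
\[\|R_Nf\|_\infty\le (K+1)\|f\|_\infty\qquad (f\in\mathrm{Hol}(\overline{\mathbb D}),\, N\ge 1).\]
I would then exhibit a distinct non-Blaschke sequence $(\lambda_n)$ of points clustering rapidly at $0$ for which this bound is violated.

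In the heuristic limit $\lambda_n\equiv 0$ (not admissible due to distinctness, used only for motivation), $B_n=(-z)^n$ and $R_Nf=f-T_{N-1}f$ with $T_N$ the Taylor partial-sum operator. The classical Fejer-Lebesgue theorem supplies polynomials $p_M\in\mathrm{Hol}(\overline{\mathbb D})$ with $\|p_M\|_\infty=1$ and $\|T_{N_M}p_M\|_\infty\ge c\log N_M$, so that $\|R_{N_M+1}p_M\|_\infty\to \infty$ in the formal limit, already contradicting the bound above. To turn this into a rigorous statement for a genuine distinct sequence, I would compare $R_N p$ with the limit operator $R_N^{(0)}p$ via the splitting
\[R_N p-R_N^{(0)} p = B_N T_{\overline{B_N-(-z)^N}}p + \bigl(B_N-(-z)^N\bigr)T_{\overline{(-z)^N}}p - B_N\overline{\lambda}_N(T_{\bar B_{N-1}}p)(\lambda_N),\]
and estimate each term using Lemma~\ref{L:Estim_NormSup} with symbol $\phi=B_N-(-z)^N$ and optimized dilation radius $R\simeq 1+1/\deg p$, which yields
\[\|R_N p-R_N^{(0)} p\|_\infty\lesssim \deg p \cdot \sum_{k\le N}|\lambda_k|\cdot \|p\|_\infty.\]

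The main technical obstacle is the mismatch between this linear-in-degree perturbation error and the mere logarithmic growth $\|R_{N_M+1}^{(0)}p_M\|_\infty\sim\log N_M$: the $\lambda_n$'s must be chosen by a diagonal induction so that, for every $M\le n$, the perturbation error for $p_M$ remains below $\tfrac 12 \|R_{N_M+1}^{(0)}p_M\|_\infty$. A super-exponential decay of $|\lambda_n|$ (with the sequence kept distinct and $\sum_n(1-|\lambda_n|)=\infty$) achieves this calibration, and the resulting sequence satisfies $\|R_{N_M+1}p_M\|_\infty\to \infty$, contradicting the Banach-Steinhaus bound and establishing the proposition.
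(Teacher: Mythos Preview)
Your strategy is genuinely different from the paper's: you perturb the monomial basis $B_n^{(0)}=(-z)^n$ (for which the Fej\'er--Lebesgue obstruction applies) to a nearby distinct non-Blaschke sequence clustering at $0$, whereas the paper manufactures the sequence $(\lambda_n)$ from the zeros of finite Blaschke products arising in a uniform approximation $h\approx r_n/\widetilde B_n$ of a function $h\in C(\mathbb T)$ with $Ph\in VMOA\setminus H^\infty$, and then invokes Banach--Steinhaus on the functionals $f\mapsto T_{\overline{B_n}}f(z)-T_{\overline{B_n}}f(w)$.

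However, your argument as written has a genuine gap at the calibration step. With the perturbation estimate you derive from Lemma~\ref{L:Estim_NormSup},
\[
\|R_{N}p-R_{N}^{(0)}p\|_\infty\ \lesssim\ \deg p\cdot\Bigl(\sum_{k\le N}|\lambda_k|\Bigr)\,\|p\|_\infty,
\]
the Fej\'er test polynomials $p_M$ necessarily satisfy $\deg p_M\ge N_M$ (otherwise $R_{N_M+1}^{(0)}p_M=0$), so the right-hand side at $N=N_M+1$ is at least $N_M\,|\lambda_1|\,\|p_M\|_\infty$. Since $|\lambda_1|>0$ is fixed once chosen, this lower bound grows \emph{linearly} in $N_M$, while your main term $\|R_{N_M+1}^{(0)}p_M\|_\infty$ grows only like $\log N_M$. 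No diagonal choice of later $\lambda_n$'s can reduce $\sum_{k\le N_M+1}|\lambda_k|$ below $|\lambda_1|$, so the inequality
\[
\deg p_M\cdot\sum_{k\le N_M+1}|\lambda_k|\ <\ \tfrac12\,\|R_{N_M+1}^{(0)}p_M\|_\infty
\]
fails for all large $M$, and the claimed calibration is impossible with the estimate you have. The ``super-exponential decay'' of $|\lambda_n|$ helps only with the \emph{tail} of the sum, not with the fixed first term.

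To salvage the approach you would need a perturbation bound that is \emph{independent of} $\deg p$ (or at most logarithmic in it), which Lemma~\ref{L:Estim_NormSup} does not provide; this would require a direct operator estimate for $T_{\overline{B_N}}-T_{\overline{(-z)^N}}$ on $H^\infty$, and it is not clear such a bound of the needed strength holds. The paper sidesteps this difficulty entirely by letting the sequence $(\lambda_n)$ be dictated by the approximation $h\approx r_n/\widetilde B_n$ rather than by proximity to $0$.
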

Recall that by Proposition \ref{Prop:unicity}, if  $f=\sum a_n B_n$ where the series converges in $\mathrm{Hol}(\mathbb{D})$, then the coefficients $a_n$ are given by \eqref{Eq:coefs}. In particular, to prove Proposition \ref{Prop:A(D)}, it is sufficient to prove the existence of a non-Blaschke sequence $(\lambda_n)_{n\geq 1}$ and a function $f\in \mathcal{A}(\mathbb{D})$ such that $\sup_{N\geq 1}\|R_Nf\|_\infty=\infty$, where
\[
R_N f=f-\sum_{n=0}^{N-1}a_nB_n=\left(T_{\overline{B}_N}f-\overline{\lambda}_NT_{\overline{B}_{N-1}}f(\lambda_N)\right)B_N.
\]
Let $\widetilde R_Nf=T_{\overline{B}_N}f-\overline{\lambda}_NT_{\overline{B}_{N-1}}f(\lambda_N)$. Then $\|R_Nf\|_\infty=\|\widetilde R_Nf\|_\infty$.
 So to prove the proposition, it is sufficient to prove the existence of a sequence $(\lambda_n)_{n\geq 1}$ and a function $f\in \mathcal{A}(\mathbb{D})$ such that $\sup_{N\geq 1} \|\widetilde R_Nf\|_\infty=\infty$. In particular, Proposition~\ref{Prop:A(D)} follows immediately from the next result. 
\begin{lem}\label{Lemma:Key_A(D)}
There exists a non-Blaschke sequence $(\lambda_n)_{n\geq 1}$ satisfying $\sup_{n\geq 1}|\lambda_n|=1$ and a function $f\in \mathcal{A}(\mathbb{D})$ such that
\[\sup_{N\geq 1}\|T_{\overline{B}_N}f-\overline{\lambda}_NT_{\overline{B}_{N-1}}f(\lambda_N)\|_\infty=\infty,
\]
where $B_n,\,n\ge0$, are given by \eqref{D:def-Bn}.
\end{lem}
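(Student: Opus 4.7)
The plan is to reduce the lemma to a classical fact about the disk algebra by choosing the simplest available non-Blaschke sequence: $\lambda_n=0$ for every $n\ge 1$. Since $\sum_n(1-|\lambda_n|)=\infty$, this is non-Blaschke, and the finite Blaschke products collapse to monomials, $B_n(z)=(-z)^n$.

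With $\lambda_N=0$ the correction term $\overline{\lambda_N}\,T_{\overline{B_{N-1}}}f(\lambda_N)$ vanishes, so the lemma's quantity equals $\|T_{\overline{B_N}}f\|_\infty$. For $f(z)=\sum_{k\ge 0}a_k z^k\in A(\mathbb D)$, a direct calculation with the definition of the Riesz projection gives
\[
T_{\overline{B_N}}f(z)=(-1)^N\sum_{j\ge 0}a_{j+N}\,z^j,
\]
and multiplying by the unimodular boundary factor $(-1)^N z^N$ yields $\|T_{\overline{B_N}}f\|_\infty=\|f-S_{N-1}f\|_\infty$, where $S_Nf$ denotes the $N$-th Taylor partial sum of $f$. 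The problem is thereby reduced to exhibiting a function $f\in A(\mathbb D)$ whose Taylor polynomials are not uniformly bounded in $\|\cdot\|_\infty$, i.e., such that $\sup_N\|S_N f\|_\infty=\infty$.

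The existence of such an $f$ is classical: the monomials $(z^n)_{n\ge 0}$ are well known to fail to be a Schauder basis of $A(\mathbb D)$. The cleanest route is to invoke Fej\'er's explicit construction of a function in $A(\mathbb D)$ whose Taylor series diverges at the boundary point $z=1$; for such an $f$ the pointwise bound $\|S_N f\|_\infty\ge|S_N f(1)|$ immediately forces $\sup_N\|S_N f\|_\infty=\infty$, hence $\sup_N\|f-S_N f\|_\infty=\infty$, as required. A parallel route is Banach--Steinhaus applied to the Taylor projections on $A(\mathbb D)$, whose operator norms are of the order of the Lebesgue constant $\|D_N^+\|_{L^1}\sim\log N$ with $D_N^+(z)=\sum_{k=0}^N z^k$. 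I expect the only point requiring care to be invoking (or re-proving) this classical fact about the disk algebra; the reductions above are routine once the right sequence $(\lambda_n)$ is chosen.
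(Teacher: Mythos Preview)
Your proof is correct and considerably more elementary than the paper's. By taking $\lambda_n\equiv 0$ you collapse $B_n$ to $(-z)^n$, make the correction term vanish, and reduce the claim to the classical fact that the Taylor projections are unbounded on $A(\mathbb D)$; the computation $\|T_{\overline{B_N}}f\|_\infty=\|f-S_{N-1}f\|_\infty$ and the appeal to Fej\'er's example (or to Banach--Steinhaus with the Lebesgue constants $\|\sum_{k=0}^N z^k\|_{L^1}\asymp\log N$) are all sound.

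The paper proceeds quite differently. It starts from a function $h\in C(\mathbb T)$ with $Ph\in VMOA\setminus H^\infty$, approximates $h$ uniformly by quotients $r_n/\widetilde B_n$ of finite Blaschke products, arranges the zero sets of the $\widetilde B_n$ to be nested and simple, and then applies Banach--Steinhaus to the oscillation functionals $f\mapsto T_{\overline{\widetilde B_n}}f(z)-T_{\overline{\widetilde B_n}}f(w)$. What this extra machinery buys is that the resulting non-Blaschke sequence consists of \emph{distinct} points, matching the standing hypothesis of Theorems~\ref{TheoX} and~\ref{Cor:hp-blaschke} and the formulation of Question~\ref{question-finale}. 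Your choice $\lambda_n\equiv 0$ proves the lemma exactly as stated, but the sequence is degenerate and lies outside the ``distinct points'' framework the rest of the paper works in; so while your argument settles the lemma, it does not by itself yield the intended sharpness statement for that framework. If that stronger conclusion is the goal, an additional perturbation step would be needed, whereas the paper's construction delivers distinct points from the outset.
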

\begin{proof}Let $h\in \mathcal{C}(\mathbb{T})$ be such that $\|h\|_\infty\le1$, but $Ph\in VMOA\setminus H^\infty$. Recall that the convex envelope of  the quotients of finite Blaschke products is dense in the unit ball of $\mathcal{C}(\mathbb{T})$, i.e.,
\[\overline{\mathrm{conv}\left(\frac{FBP}{FBP}\right)}=\{g\in \mathcal{C}(\mathbb{T})\,;\,\|g\|_\infty\le 1\},\]
where $FBP$ denotes the set of all finite Blaschke products. See \cite{MR3586184} for details. Then there exist rational functions $r_n$ with poles outside the closed unit disk, satisfying $\|r_n\|_\infty\le1$, and finite Blaschke products $\widetilde B_n$ such that 
\begin{equation}\label{Eq:Proof5.2-1}
    \left\|h-\frac{r_n}{\widetilde B_n}\right\|_\infty\longrightarrow0~\text{when}~n\to\infty.
\end{equation}
By  multiplying if necessary $r_n$ and $\widetilde B_n$ by some Blaschke factor, we can assume that the zero sets satisfy $Z(\widetilde B_n)\subset Z(\widetilde B_{n+1})$ and $1-1/n\in Z(\widetilde B_n)$. Let $(\lambda_n)_{n\geq 1}$ be the non-Blaschke sequence defined by the union of these zeros, arranged such that $(\widetilde B_n)$ is a subsequence of the Blaschke products $B_n$, $n\ge1$, given by  \eqref{D:def-Bn}. 

From \eqref{Eq:Proof5.2-1} and the continuity of the Riesz projection $P$ from $\mathcal{C}(\mathbb{T})$ to $VMOA$, we have
\[
\|Ph-T_{\overline{\widetilde B}_n}\,r_n\|_{VMO}\longrightarrow0~\qquad\text{when}~n\to\infty.
\]
In particular, for all $z\in\mathbb{D}$, $Ph(z)=\lim\limits_{n\to\infty}T_{\overline{\widetilde B}_n}\,r_n(z)$. But since $Ph\notin H^\infty$, we get $\sup_{z,w\in\mathbb{D}}|Ph(z)-Ph(w)|=\infty$. Thus, we deduce that 
\begin{equation}\label{Eq:lemA(A)1}
    \sup_{n\ge1,w,z\in\mathbb{D}}|T_{\overline{\widetilde B}_n}r_n(z)-T_{\overline{\widetilde B}_n}r_n(w)|=\infty.
\end{equation} 
Let $S_{n,w,z}$ be the linear functional on $\mathcal{A}(\mathbb{D})$ defined by \[S_{n,w,z}f=T_{\overline{\widetilde  B}_n}f(z)-T_{\overline{\widetilde  B}_n}f(w),~f\in \mathcal{A}(\mathbb{D}).\] Since $\|r_n\|_\infty\le1$, then by \eqref{Eq:lemA(A)1}, $\sup_{n,z,w}\|S_{n,w,z}\|=\infty$.

Therefore, by the Banach-Steinhaus theorem, there exists $f\in \mathcal{A}(\mathbb{D})$ such that
\[
\sup_{n,z,w}|T_{\overline{\widetilde B}_n}f(z)-T_{\overline{\widetilde  B}_n}f(w)|=\sup_{n,z,w}|S_{n,w,z}f|=\infty.
\]
 Since $(\widetilde B_n)$ is a subsequence of $(B_n)$ given by \eqref{D:def-Bn}, it follows that
\[\sup_{n,z,w}|T_{\overline{B}_n}f(z)-T_{\overline{B}_n}f(w)|\ge\sup_{n,z,w}|T_{\overline{\widetilde B}_n}f(z)-T_{\overline{\widetilde B}_n}f(w)|=\infty. \]

Now with $\widetilde R_nf=T_{\overline{B}_N}f-\overline{\lambda}_nT_{\overline{B}_{n-1}}f(\lambda_n)$, we have \[\widetilde R_nf(z)-\widetilde R_nf(w)=T_{\overline{B}_n}f(z)-T_{\overline{B}_n}f(w)
\] and so
\[\sup_n\|R_nf\|_\infty= \sup_n\|\widetilde R_nf\|_\infty\ge\frac12\sup_{n,z,w}|\widetilde R_nf(z)-\widetilde R_nf(w)|=\infty.\qedhere\]
\end{proof}

The construction of the sequence $(\lambda_n)_{n\geq 1}$ in the proof of Lemma \ref{Lemma:Key_A(D)} leaves some freedom, in particular the possibility of adding terms in the sequence. In other words, for every sequence of points in $\mathbb D$ having $(\lambda_n)_{n\geq 1}$ as a subsequence, the associated family of finite Blaschke products constructed by \eqref{D:def-Bn} does not form a Schauder basis for $\mathcal{A}(\mathbb D)$. Nevertheless, this does not solve the question for all non-Blaschke sequences $(\lambda_n)_{n\geq 1}$ satisfying $\sup_n|\lambda_n| = 1$. In particular, this leaves unknown the answer to the following intriguing  question.
\begin{question}
Does there exist a Shauder basis of $\mathcal{A}(\mathbb{D})$ of the form $(B_n)_{n\geq 1}$ where $(\lambda_n)_{n\geq 1}$ is a 
non-Blaschke sequence in $\mathbb{D}$ ? 
\end{question}

\section*{Acknowledgment}
 This work is supported by the Canada Research Chairs program and the Discovery Grant of NSERC. This work is also partially supported by the COMOP project of the French National Research Agency (grant ANR-24-CE40-0892-01).  The authors acknowledge the support of the CDP C2EMPI, as well as the French State under the France-2030 programme, the University of Lille, the Initiative of Excellence of the University of Lille, the European Metropolis of Lille for their funding and support of the R-CDP-24-004-C2EMPI project.

\bibliographystyle{plain}
\bibliography{Basis-References}
\end{document}